\newtheorem{theorem}{Theorem}[section]
\newtheorem{example}[theorem]{Example}
\newtheorem{remark}[theorem]{Remark}
\newtheorem{corollary}[theorem]{Corollary}
\newtheorem{lemma}[theorem]{Lemma}
\renewcommand{\Re}{\mathop{\rm Re}}
\renewcommand{\Im}{\mathop{\rm Im}}
\begin{document}
\title{A generalization of the Clunie--Sheil-Small theorem}
\author{Ma{\l}gorzata Michalska, Andrzej M. Michalski}

\address{
Ma{\l}gorzata Michalska,  \newline Institute of Mathematics,
\newline Maria Curie-Sk{\l}odowska University, \newline pl. M.
Curie-Sk{\l}odowskiej 1, \newline 20-031 Lublin, Poland}
\email{malgorzata.michalska@poczta.umcs.lublin.pl}

\address{
Andrzej M. Michalski, \newline
Department of Complex Analysis, \newline
The John Paul II Catholic University of Lublin, \newline
ul. Konstantyn\'{o}w 1H, \newline
20-950 Lublin, Poland}
\email{amichal@kul.lublin.pl}

\date{\today}
\subjclass[2010]{31A05, 30C55, 30C45} \keywords{harmonic mappings,
convex in one direction, shear construction} \maketitle
\begin{abstract}
In 1984, a simple and useful univalence criterion for harmonic
functions was given by Clunie and Sheil-Small, which is usually
called the shear construction. However, the application of this
theorem is limited to the planar harmonic mappings convex in the
horizontal direction. In this paper, a natural generalization of the
shear construction is given. More precisely, our results are
obtained under the hypothesis that the image of a harmonic mapping
is a sum of two sets convex in the horizontal direction.
\end{abstract}

\baselineskip1.4\baselineskip

\section{Introduction}
Let $\mathbb{D}:=\{z\in\mathbb{C}:|z|<1\}$ be the open unit disk in
the complex plane $\mathbb{C}$.
A function $f:\mathbb{D}\to\mathbb{C}$ is said to be harmonic,
if its real and imaginary parts are real harmonic, i.e. they satisfy the Laplace equation.
Since $\mathbb{D}$ is simply connected it is well-known that $f$ can be written in the form
\begin{equation}\label{f_sum_hol}
  f(z)=h(z)+\overline{g(z)},\quad z\in\mathbb{D},
\end{equation}
where $h$ and $g$ are analytic in $\mathbb{D}$.
The Jacobian $J_f$ of $f$ in terms of $h$ and $g$ is given by
\begin{equation}\label{f_jacobian}
  J_f(z)=|h'(z)|^2-|g'(z)|^2,\quad z\in\mathbb{D}.
\end{equation}
Among all the harmonic functions in $\mathbb{D}$ one can distinguish
those with non-vanishing Jacobian. In fact, it is proved that such
harmonic functions are locally 1-1. If the Jacobian of a harmonic
function in $\mathbb{D}$ is positive, it means that this function is
locally 1-1 and sense-preserving. More information about basics of
harmonic functions can be found e.g. in \cite{Duren1}.

Clunie and Sheil-Small in \cite{ClunieSheilSmall1} gave the following theorem, known as the shear construction.

{\renewcommand{\thetheorem}{A} 
\begin{theorem}\label{shear_construction}
A function $f=h+\overline{g}$ harmonic in $\mathbb{D}$ with positive Jacobian is 1-1 sense-preserving mapping of $\mathbb{D}$
onto a domain convex in the direction of the real axis if, and only if, $h-g$ is an analytic 1-1 mapping of $\mathbb{D}$
onto a domain convex in the direction of the real axis.
\end{theorem}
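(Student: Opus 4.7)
The plan is to reduce the theorem to a level-set analysis of the (common) imaginary parts of $f$ and $\varphi:=h-g$. The starting point is the trivial identity $\Im f = \Im h - \Im g = \Im\varphi$, so for every $c\in\mathbb{R}$ the horizontal-line preimages
\begin{equation*}
L_c:=\{z\in\mathbb{D}:\Im f(z)=c\}=\{z\in\mathbb{D}:\Im\varphi(z)=c\}
\end{equation*}
coincide. The condition $J_f>0$ read through \eqref{f_jacobian} gives $|h'|>|g'|$ on $\mathbb{D}$; consequently $\varphi'=h'-g'$ is nowhere zero, and each nonempty $L_c$ is a disjoint union of regular analytic arcs. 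I will use throughout the following characterization, valid for both an analytic mapping and a sense-preserving harmonic mapping $F$: the function $F$ is $1$-$1$ on $\mathbb{D}$ with image convex in the direction of the real axis if and only if, for each $c\in\mathbb{R}$, the set $L_c$ is either empty or a single connected arc along which $\Re F$ is strictly monotone.

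The heart of the proof is a pointwise comparison of the motion of $f$ and $\varphi$ along a level set. Parametrize a smooth component $\gamma(s)$ of $L_c$ and set $u:=h'(\gamma)\gamma'$, $v:=g'(\gamma)\gamma'$. A direct calculation (using $\partial f/\partial z=h'$, $\partial f/\partial\bar z=\overline{g'}$) gives
\begin{equation*}
\frac{d}{ds}\varphi(\gamma)=u-v,\qquad \frac{d}{ds}f(\gamma)=u+\overline{v},
\end{equation*}
both of which are real because $\Im\varphi$ and $\Im f$ are constant along $\gamma$. Reality of $u-v$ forces $\Im u=\Im v$, while $|h'|>|g'|$ forces $|u|>|v|$; combining these yields $|\Re u|>|\Re v|$, and therefore
\begin{equation*}
\frac{d\,\Re\varphi}{ds}=\Re u-\Re v\quad\text{and}\quad \frac{d\,\Re f}{ds}=\Re u+\Re v
\end{equation*}
are nonzero and share the sign of $\Re u$. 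Thus $\Re f$ and $\Re\varphi$ are strictly monotone along $\gamma$ with the same orientation. This miraculous same-sign conclusion, which is exactly what makes $J_f>0$ the right hypothesis, is the step I expect to be the main obstacle: one has to track several real/imaginary part identities simultaneously, and the whole theorem hinges on the conspiracy between the reality constraint $\Im u=\Im v$ and the modulus gap $|u|>|v|$.

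With this comparison in hand, both implications drop out of the level-set criterion. Suppose first that $\varphi$ is analytic $1$-$1$ with CHD image. Then for each $c$ the set $L_c$ is a single arc on which $\Re\varphi$ is strictly monotone, so by the previous paragraph $\Re f$ is strictly monotone on the same arc; since the $L_c$ are pairwise disjoint, $f$ is injective and its image is the disjoint union of the horizontal intervals $f(L_c)$, which is convex in the direction of the real axis. Conversely, suppose $f$ is a sense-preserving harmonic $1$-$1$ mapping onto a CHD domain. By invariance of domain $f$ is a homeomorphism onto its image, so the preimage of each horizontal slice of $f(\mathbb{D})$ is a single connected arc, which must equal $L_c$; $\Re f$ is strictly monotone along it, hence so is $\Re\varphi$. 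The criterion applied to the analytic function $\varphi$ then gives univalence of $\varphi$ and convexity of its image in the direction of the real axis, completing the proof.
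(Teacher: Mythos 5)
Your argument is correct, but it follows a genuinely different route from the one the paper relies on. The paper (following Clunie and Sheil-Small) obtains Theorem A from the topological Lemma B: composing, one writes $f\circ(h-g)^{-1}(w)=w+2\Re g\bigl((h-g)^{-1}(w)\bigr)$ in one direction and $(h-g)\circ f^{-1}(w)=w-2\Re g\bigl(f^{-1}(w)\bigr)$ in the other, so in each case the transition map has the form $w\mapsto w+p(w)$ with $p$ real-valued and continuous on a domain convex in the direction of the real axis; Lemma B then upgrades local injectivity (supplied by $J_f>0$, resp.\ $h'-g'\neq 0$) to global injectivity and preserves convexity of the image. You instead work directly on the common level sets $L_c=\{\Im f=c\}=\{\Im(h-g)=c\}$ and show by an explicit derivative computation that $\Re f$ and $\Re(h-g)$ are simultaneously strictly monotone along each component; the step where reality of $u-v$ forces $\Im u=\Im v$, which together with $|u|>|v|$ gives $|\Re u|>|\Re v|$, is exactly the quantitative content of $J_f>0$ and is correct. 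The trade-off: the Lemma~B route is softer and more general (it needs only continuity of $p$, and the paper reuses the lemma verbatim in its Theorems 2.2 and 3.2), while your computation is self-contained, handles both implications symmetrically through one inequality, and makes visible why positivity of the Jacobian is the right hypothesis. Two minor points worth making explicit if you write this up: components of $L_c$ cannot be simple closed curves (maximum principle applied to $\Im(h-g)$), so strict monotonicity along a component does yield injectivity there; and in each implication the fact that $L_c$ is a \emph{single} arc comes from pulling back a connected horizontal slice under the map already assumed univalent, hence a homeomorphism by invariance of domain, as you indicate.
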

\addtocounter{theorem}{-1}}%

It appeared to have many applications as an univalence criterion and
as a method of constructing harmonic mappings (see, e.g.,
\cite{DorffNowakWoloszkiewicz, DorffSzynal, DriverDuren,
GanczarWidomski, GrigorianSzapiel, HengartnerSchober,
KlimekSmetMichalski, Livingston}).

In this paper we generalize the theorem of Clunie and Sheil-Small.
In Section 2 we show some auxiliary results. In Section 3 we use
results from Section 2 to give new conditions for univalence of the
planar harmonic mappings.

\section{Topological properties}\setcounter{equation}{0}
The proof of Theorem \ref{shear_construction} of Clunie and Sheil-Small relies on the following lemma,
which will be also useful in our considerations.

{\renewcommand{\thetheorem}{B} 
\begin{lemma}\label{shear_lemma}
Let $D$ be a domain convex in the direction of the real axis and let $p$ be a continuous real-valued function in $D$.
Then the mapping $D\ni w\mapsto w+p(w)$ is 1-1 in $D$ if, and only if, it is locally 1-1.
In this case the image of $D$ is convex in the direction of the real axis.
\end{lemma}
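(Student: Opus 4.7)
\medskip
\noindent\textbf{Proof proposal.}
The plan is to exploit the defining feature of $F(w):=w+p(w)$, namely that $p$ is real-valued, so $\Im F(w)=\Im w$; in other words, $F$ acts on each horizontal line by a purely horizontal shift. Combined with the assumption that $D$ is convex in the direction of the real axis, this reduces the two-dimensional injectivity problem to a family of one-dimensional problems, where ``locally $1$-$1$'' is known to force monotonicity.

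First I would fix $y_0\in\mathbb{R}$ and consider the horizontal slice
$I_{y_0}:=\{x\in\mathbb{R}:x+iy_0\in D\}.$
Because $D$ is convex in the direction of the real axis, $I_{y_0}$ is either empty or an open interval; and since $F$ preserves imaginary parts, $F$ sends $I_{y_0}+iy_0$ into the line $\{\Im w=y_0\}$. I would then introduce the continuous real function
$\phi_{y_0}(x):=x+p(x+iy_0)$
on $I_{y_0}$, so that $F(x+iy_0)=\phi_{y_0}(x)+iy_0$.

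The forward implication (``1-1 $\Rightarrow$ locally 1-1'') is immediate. For the converse, observe that local injectivity of $F$ on an open neighbourhood of $x_0+iy_0$ in $D$ restricts to local injectivity of $\phi_{y_0}$ on an open neighbourhood of $x_0$ in $I_{y_0}$. Since a continuous, locally injective map from an interval into $\mathbb{R}$ is strictly monotonic, each $\phi_{y_0}$ is strictly monotonic, hence injective, on the interval $I_{y_0}$. Combining this horizontal injectivity with the identity $\Im F(w)=\Im w$ (which prevents points on different horizontal lines from colliding) yields global injectivity of $F$ on $D$.

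For the last assertion, note that the horizontal slice of $F(D)$ at height $y_0$ is exactly $\phi_{y_0}(I_{y_0})+iy_0$, which is an interval because $\phi_{y_0}$ is continuous and strictly monotonic on the interval $I_{y_0}$. Therefore every horizontal line meets $F(D)$ in a connected set, i.e. $F(D)$ is convex in the direction of the real axis. (To present $F(D)$ as a \emph{domain} one invokes invariance of domain on the continuous injection $F$, but the convexity statement itself does not need this.) The main subtlety, in my view, is making sure that the convexity hypothesis on $D$ is used at the right moment, namely to guarantee that the slices $I_{y_0}$ are single intervals (not unions of intervals); without this one could have local injectivity on each component but failure of global injectivity across components, and the one-dimensional ``locally 1-1 $\Rightarrow$ monotonic'' argument would collapse.
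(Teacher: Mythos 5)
Your proof is correct. Note that the paper itself gives no proof of this statement: it is quoted as Lemma B, a known result of Clunie and Sheil-Small, with a citation to their 1984 paper. Your argument --- slicing $D$ along horizontal lines, reducing to the one-dimensional fact that a continuous locally injective function on an interval is strictly monotone, and reading off convexity of the image from the slices being continuous monotone images of intervals --- is precisely the standard proof of this lemma, and you correctly isolate the one place where convexity of $D$ in the real direction is needed (each slice must be a single interval).
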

\addtocounter{theorem}{-1}}%

Using this lemma we will prove more general results and apply them to obtain new univalence criteria
for harmonic mappings.
For a given set $D$ in the complex plane $\mathbb{C}$ it will be convenient to define
\begin{equation}\label{set_projection}
  P_y(D):=\left\{a\in\mathbb{R}:\exists_{z\in D}\Im z=a\right\}.
\end{equation}
Such defined set $P_y(D)$ has several immediate properties,
which we formulate in the following lemma for convenience in further use.

\begin{lemma}\label{projection_lemma}
Let $D_1$ and $D_2$ be the domains with nonempty intersection such
that $D_1\cup D_2$ is simply connected. Then $P_y(D_1)$, $P_y(D_2)$
and $P_y(D_1\cap D_2)$ are open intervals.
\end{lemma}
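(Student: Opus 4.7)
The plan is to handle the first two claims by a direct topological observation and to reduce the third to showing that $D_1\cap D_2$ is connected. Specifically, the map $\Im:\mathbb{C}\to\mathbb{R}$ is continuous, surjective and real-linear, hence open; so for any open set $D\subset\mathbb{C}$ the projection $P_y(D)=\Im(D)$ is open in $\mathbb{R}$, and if in addition $D$ is connected then $P_y(D)$ is a connected open subset of $\mathbb{R}$, i.e.~an open interval. Applied to the domains $D_1$ and $D_2$ this immediately settles the first two assertions.

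The remaining assertion reduces in exactly the same way provided $D_1\cap D_2$ is known to be connected, and this is where the hypothesis on $D_1\cup D_2$ enters. Suppose, toward a contradiction, that $D_1\cap D_2=U\sqcup V$ with $U,V$ disjoint nonempty open sets. The locally constant function $\chi:D_1\cap D_2\to\mathbb{Z}$ equal to $0$ on $U$ and $1$ on $V$ is a \v{C}ech $1$-cocycle for the open cover $\{D_1,D_2\}$ of $D_1\cup D_2$; because $D_1$ and $D_2$ are each connected, every integer-valued locally constant function on either one is a constant, so $\chi$ is not a coboundary and therefore represents a nonzero class in $H^1(D_1\cup D_2;\mathbb{Z})$. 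This contradicts the simple connectivity of $D_1\cup D_2$. Equivalently, the relevant portion of the reduced Mayer--Vietoris sequence
\[
H_1(D_1\cup D_2)\longrightarrow\tilde H_0(D_1\cap D_2)\longrightarrow\tilde H_0(D_1)\oplus\tilde H_0(D_2)
\]
has vanishing outer terms, forcing $\tilde H_0(D_1\cap D_2)=0$.

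With $D_1\cap D_2$ now established as a domain, the opening observation gives at once that $P_y(D_1\cap D_2)$ is an open interval. The principal obstacle is the connectivity step above; a fully elementary proof can be given by using $\chi$ to construct a continuous $S^1$-valued map on $D_1\cup D_2$ whose winding number along a loop that traverses $D_1$ from a point of $U$ to a point of $V$ and returns through $D_2$ is nonzero, again contradicting simple connectivity via the standard theory of winding numbers of loops in planar open sets.
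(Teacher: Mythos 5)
Your proof is correct, and the first part (openness and connectedness of $\Im(D)$ for an open connected $D$, since $\Im$ is a continuous open map) matches the paper's reasoning exactly. The difference lies in how you establish the one nontrivial point, namely that $D_1\cap D_2$ is connected: the paper simply cites the Janiszewski theorem (from Kuratowski's book), a classical point-set result tailored to exactly this situation, whereas you derive it from the reduced Mayer--Vietoris sequence
\[
0=H_1(D_1\cup D_2)\longrightarrow\tilde H_0(D_1\cap D_2)\longrightarrow\tilde H_0(D_1)\oplus\tilde H_0(D_2)=0,
\]
or equivalently from the observation that a nonconstant locally constant function on $D_1\cap D_2$ is a \v{C}ech $1$-cocycle for the cover $\{D_1,D_2\}$ that cannot be a coboundary, giving a nonzero class in $H^1(D_1\cup D_2;\mathbb{Z})$. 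Both routes are valid; note only that $\tilde H_0$ detects path components, which for open subsets of $\mathbb{C}$ coincide with components, and that your argument uses $H_1(D_1\cup D_2)=0$, which follows from simple connectivity via Hurewicz (the union is path-connected since $D_1\cap D_2\neq\emptyset$). Your approach buys a self-contained derivation from standard algebraic topology and generalizes verbatim to open covers of any simply connected space by two connected open sets; the paper's citation keeps the lemma elementary in tone and avoids homological machinery, at the cost of outsourcing the key step to a reference. The closing winding-number sketch is a reasonable elementary substitute but is only a sketch; as written, the cohomological version is the part that actually carries the proof.
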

\begin{proof}
The Janiszewski theorem \cite[p.~268, Theorem~2]{Kuratowski1} yields the connectedness of the set $D_1\cap D_2$,
which clearly is also open. Thus $D_1\cap D_2$ is a nonempty domain as well as $D_1$ and $D_2$.
Hence, obviously, $P_y(D_1)$, $P_y(D_2)$ and $P_y(D_1\cap D_2)$ are open and connected subset of the real line $\mathbb{R}$,
which completes the proof.
\end{proof}
Using this lemma we can prove the following theorem.
\begin{theorem}\label{thm_1}
Let $D_1$, $D_2$ be the domains convex in the direction of the real axis
and let $q:D_1\cup D_2\to\mathbb{C}$ be a continuous function for which Jacobian $J_q$ exists, and such that $\Im q(z)=\Im z$ for all $z\in D_1\cup D_2$.
Then $q$ is 1-1 if, and only if, $J_q\neq 0$ and
  $$P_y(D_1\cap D_2)=P_y(q(D_1)\cap q(D_2)).$$
\end{theorem}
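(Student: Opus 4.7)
My first move is to exploit the identity $\Im q(z)=\Im z$ by writing $q(z)=z+p(z)$ with $p(z):=q(z)-z$ a continuous real-valued function on $D_1\cup D_2$. The hypothesis $J_q\neq 0$, combined with the assumed existence of $J_q$, makes $q$ locally injective, and Lemma B---applied separately to each $D_i$, which is convex in the real direction---immediately gives that $q$ is $1$-$1$ on each $D_i$ individually and that $q(D_i)$ is convex in the real direction.

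\textbf{Sufficiency.} The remaining task is to rule out collisions between points of $D_1$ and $D_2$. Suppose $q(z_1)=q(z_2)$ with $z_1\in D_1$ and $z_2\in D_2$. The common height $a:=\Im z_1=\Im z_2$ (these agree because $q(z_1)=q(z_2)$ together with $\Im q(z)=\Im z$) satisfies $a\in P_y(q(D_1)\cap q(D_2))$, which by hypothesis equals $P_y(D_1\cap D_2)$; so I can pick $w_0\in D_1\cap D_2$ with $\Im w_0=a$. Let $L_a:=\{z\in\mathbb{C}:\Im z=a\}$. Convexity of each $D_i$ in the real direction makes $L_a\cap D_1$ and $L_a\cap D_2$ open intervals both containing $w_0$, so their union $I:=L_a\cap(D_1\cup D_2)$ is a single open interval. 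On $I$ the map $q$ takes values in $L_a$, is continuous, and inherits local injectivity from the ambient domain. The classical real-analysis fact that a continuous, locally injective real-valued function on an interval is strictly monotonic then yields that $q|_I$ is $1$-$1$, and hence $z_1=z_2$.

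\textbf{Necessity.} Conversely, suppose $q$ is $1$-$1$ on $D_1\cup D_2$. Global injectivity trivially gives local injectivity, which under the standing regularity is the content of $J_q\neq 0$. The inclusion $P_y(D_1\cap D_2)\subseteq P_y(q(D_1)\cap q(D_2))$ is immediate from $q(D_1\cap D_2)\subseteq q(D_1)\cap q(D_2)$ and $\Im q(z)=\Im z$. For the opposite inclusion, any $w\in q(D_1)\cap q(D_2)$ with $\Im w=a$ has the form $w=q(z_1)=q(z_2)$ with $z_j\in D_j$; injectivity collapses these to a single $z\in D_1\cap D_2$ with $\Im z=a$.

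\textbf{Main obstacle.} The delicate step is the slice argument in the sufficiency direction, where two ingredients must cooperate. First, the slice $I$ must be a single interval rather than a disjoint union of two, and this is exactly where the projection equality enters by producing the gluing point $w_0\in L_a\cap D_1\cap D_2$; without it, a locally injective continuous function on a disconnected $I$ could easily fail global injectivity. Second, the two-dimensional local injectivity of $q$ must descend to local injectivity of $q|_I$, which is easy but is the conceptual point at which the one-dimensional heart of Lemma B's proof reappears in the present, generalized setting.
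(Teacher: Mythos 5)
Your proof is correct and follows essentially the same route as the paper's: necessity is argued identically, and for sufficiency both proofs reduce a potential collision between $D_1$ and $D_2$ to a single horizontal slice, where the projection equality supplies the overlap point that fuses the two slice-intervals into one, on which continuity plus local injectivity forces monotonicity (the content of Lemma B). Your slice-by-slice phrasing --- invoking the one-dimensional monotonicity fact directly instead of applying Lemma B wholesale to $(D_1\cup D_2)\cap\{z\in\mathbb{C}:\Im z\in P_y(D_1\cap D_2)\}$ --- is a harmless reorganization, if anything slightly cleaner, since that set need not be connected when $D_1\cap D_2$ is disconnected.
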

\begin{proof}
If $D_1$, $D_2$ are two disjoint domains convex in the direction of
the real axis then our claim follows immediately from Theorem
\ref{shear_construction}. Hence, we consider the case $D_1\cap
D_2\not=\emptyset$.

Assume that $q$ is 1-1 in $D_1\cup D_2$. We show that Jacobian is
not equal to $0$ and $P_y(D_1\cap D_2)=P_y(q(D_1)\cap q(D_2))$. It
is clear that if $q$ is 1-1, then it is locally 1-1 and thus
$J_q\neq 0$. It is also clear that $P_y(D_1\cap D_2)\subset
P_y(q(D_1)\cap q(D_2))$. We show the inverse inclusion. Let $a\in
\mathbb{R}\setminus P_y(D_1\cap D_2)$ be fixed. Then for any choice
of $z_1\in D_1$ and $z_2\in D_2$ such that $\Im z_1=a$ and $\Im
z_2=a$ we have $q(z_1)\neq q(z_2)$, since $q$ is 1-1. Thus we deduce
that $a\notin P_y(q(D_1)\cap q(D_2))$, which means that
$P_y(q(D_1)\cap q(D_2))\subset P_y(D_1\cap D_2)$. Hence we get
$P_y(D_1\cap D_2)=P_y(q(D_1)\cap q(D_2))$.

To prove the converse we assume that $J_q\neq 0$ and $P_y(D_1\cap D_2)=P_y(q(D_1)\cap q(D_2))$ and we show that $q$ is 1-1.
The property $\Im q(z)=\Im z$ for all $z\in D_1\cup D_2$ together with Lemma \ref{shear_lemma} ensure that $q$ is 1-1 in $(D_1\cup D_2)\cap\{z\in\mathbb{C}:\Im z\in P_y(D_1\cap D_2)\}$.
Assume that $q$ be not 1-1 in
  $$\widetilde{D}:=(D_1\cup D_2)\cap\{z\in\mathbb{C}:\Im z\notin P_y(D_1\cap D_2)\}.$$
Then, there exist $a\in \widetilde{D}$ and $z_1,z_2\in D_1\cup D_2$ such that $a=\Im z_1=\Im z_2$ and $q(z_1)=q(z_2)$.
But the last equality means that $a\in P_y(q(D_1)\cap q(D_2))$ and by the definition of $\widetilde{D}$ we have $a\notin P_y(D_1\cap D_2)$, which is a contradiction to the assumption that $P_y(D_1\cap D_2)=P_y(q(D_1)\cap q(D_2))$. Thus, $q$ is 1-1 in $\widetilde{D}$. Now, the property $\Im q(z)=\Im z$ for all $z\in D_1\cup D_2$ implies that $q$ is 1-1 in $D_1\cup D_2$ and this completes the proof.
\end{proof}

Replacing the univalence condition in Theorem \ref{thm_1} by the condition that the sets $D_1\cup D_2$ and $q(D_1)\cup q(D_2)$ are simply connected we get.

\begin{theorem}\label{thm_2}
Let $D_1$, $D_2$ be the domains convex in the direction of the real
axis with nonempty intersection and let $q:D_1\cup D_2\to\mathbb{C}$
be a continuous function, such that $J_q$ exists and it is not equal
to $0$, and $\Im q(z)=\Im z$ for all $z\in D_1\cup D_2$. If $D_1\cup
D_2$ and $q(D_1)\cup q(D_2)$ are simply connected then
  $$P_y(D_1\cap D_2)=P_y(q(D_1)\cap q(D_2)).$$
\end{theorem}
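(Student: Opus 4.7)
The plan is to reduce Theorem~\ref{thm_2} to Theorem~\ref{thm_1} by showing that, under the hypotheses, $q$ is $1$-$1$ on $D_1\cup D_2$; once this is done, the ``only if'' direction of Theorem~\ref{thm_1}, combined with $J_q\neq 0$, immediately delivers the desired equality $P_y(D_1\cap D_2)=P_y(q(D_1)\cap q(D_2))$.

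First I would apply Lemma~\ref{shear_lemma} separately on $D_1$ and $D_2$. Since $\Im q(z)=\Im z$, the map $q|_{D_i}$ has the form $z\mapsto z+p_i(z)$ with $p_i$ continuous and real-valued, and $J_q\neq 0$ guarantees local injectivity, so Lemma~\ref{shear_lemma} upgrades this to global injectivity of $q|_{D_i}$; moreover the image $q(D_i)$ is a domain convex in the direction of the real axis. Write $F_i\colon q(D_i)\to D_i$ for the continuous inverse of $q|_{D_i}$. Since $q(D_1)$ and $q(D_2)$ are now domains whose intersection contains the nonempty set $q(D_1\cap D_2)$ and whose union is simply connected by hypothesis, the Janiszewski-theorem argument used in the proof of Lemma~\ref{projection_lemma} gives that $q(D_1)\cap q(D_2)$ is connected.

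The heart of the proof is to verify $q(D_1\cap D_2)=q(D_1)\cap q(D_2)$ via a clopen argument inside $q(D_1)\cap q(D_2)$. The inclusion $\subset$ is obvious, and $q(D_1\cap D_2)$ is open in $q(D_1)\cap q(D_2)$ because each $q|_{D_i}$ is a homeomorphism and $D_1\cap D_2$ is open in $D_i$. For closedness, suppose $w_n=q(z_n)$ with $z_n\in D_1\cap D_2$ and $w_n\to w\in q(D_1)\cap q(D_2)$; then $z_n=F_1(w_n)=F_2(w_n)$, continuity of $F_1$ and $F_2$ gives $F_1(w)=F_2(w)=\lim z_n$, and this common limit lies in $D_1\cap D_2$, so $w\in q(D_1\cap D_2)$. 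Connectedness of $q(D_1)\cap q(D_2)$ then forces the equality.

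Once $q(D_1\cap D_2)=q(D_1)\cap q(D_2)$, the inverses $F_1$ and $F_2$ coincide on the full overlap and glue into a continuous left inverse $F\colon q(D_1)\cup q(D_2)\to D_1\cup D_2$ of $q$, so $q$ is $1$-$1$; Theorem~\ref{thm_1} then yields the required equality of projections. The main obstacle I expect is precisely the closedness half of the clopen step together with securing the connectedness of $q(D_1)\cap q(D_2)$: the hypothesis that $q(D_1)\cup q(D_2)$ is simply connected is exactly what guarantees this connectedness, and without it ``extra'' preimage sheets of $q$ outside $q(D_1\cap D_2)$ could in principle survive and spoil the equality.
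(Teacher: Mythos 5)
Your proposal is correct, but it takes a genuinely different route from the paper's. Both arguments share the same opening moves: Lemma~\ref{shear_lemma} makes $q$ injective on each $D_i$ separately, and the Janiszewski argument of Lemma~\ref{projection_lemma}, applied to $q(D_1)$ and $q(D_2)$ via the identity $q(D_1\cup D_2)=q(D_1)\cup q(D_2)$, yields connectedness of $q(D_1)\cap q(D_2)$. From there the paper argues by contradiction at the level of projections: it selects a height $\tilde{a}\in P_y(q(D_1)\cap q(D_2))\setminus P_y(D_1\cap D_2)$ adjacent to the interval $P_y(D_1\cap D_2)$, pulls two points $w_1,w_2$ at that height back to four preimages $\eta_1,\eta_2\in D_1$, $\zeta_1,\zeta_2\in D_2$, uses the normalization $J_q>0$ to order their real parts, and then approximates by sequences at heights in $\widetilde{A}_\varepsilon$ (where $q$ is already known to be injective on the union) to contradict $\Re w_1<\Re w_2$. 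You instead prove the stronger set identity $q(D_1\cap D_2)=q(D_1)\cap q(D_2)$ by a clopen argument and then glue the local inverses $F_1,F_2$ into a global left inverse of $q$. Your route buys several things: it never uses the orientation of $q$ (the paper must first reduce to $J_q>0$), it delivers global injectivity of $q$ directly rather than afterwards via Corollary~\ref{cor_1}, and in fact the projection equality follows immediately from $q(D_1\cap D_2)=q(D_1)\cap q(D_2)$ together with $\Im q(z)=\Im z$, so the final appeal to Theorem~\ref{thm_1} is not even needed. The one step worth flagging is that the continuity of $F_1,F_2$ and the openness of $q(D_1\cap D_2)$ rest on invariance of domain (or on monotonicity of $x\mapsto x+p(x+iy)$ along horizontal segments); this is the same tacit ingredient the paper invokes when it asserts uniqueness of the preimages $\eta_j,\zeta_j$, so it is not a gap relative to the paper's own standard of rigor.
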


\begin{proof}
First, observe that the inclusion
\begin{equation}\label{incl}
  P_y(D_1\cap D_2)\subset P_y(q(D_1)\cap q(D_2))
\end{equation}
is valid for all domains $D_1$, $D_2$ and for all functions $q$ satisfying assumptions of Theorem \ref{thm_2}.

Now, we prove the inverse inclusion. We can assume that the Jacobian $J_q$ is positive.
Notice, that if $D_1\cup D_2$ is simply connected
then, by Lemma \ref{projection_lemma}, the set $P_y(D_1\cap D_2)$ is connected.
By the same Lemma \ref{projection_lemma} and obvious equality $q(D_1\cup D_2)=q(D_1)\cup q(D_2)$
we deduce that $P_y(q(D_1)\cap q(D_2))$ is connected since $q(D_1\cup D_2)$ is simply connected.
Moreover, $P_y(D_1\cap D_2)$ and $P_y(q(D_1)\cap q(D_2))$ are open since $D_1\cap D_2$ and $q(D_1)\cap q(D_2)$ are open, hence $P_y(D_1\cap D_2)$ and $P_y(q(D_1)\cap q(D_2))$ are nonempty open intervals.

Next, assume that there exists a real number $a$ such that $a\in P_y(q(D_1)\cap q(D_2))$ and $a\notin P_y(D_1\cap D_2)$.
Then, there exist $\tilde{a}\in P_y(q(D_1)\cap q(D_2))\setminus P_y(D_1\cap D_2)$
and $\varepsilon>0$ such that the sets
  $$\widetilde{A}_\varepsilon:=(\tilde{a}-\varepsilon,\tilde{a}+\varepsilon)\cap P_y(D_1\cap D_2)
  \quad\text{ and }\quad
  (\tilde{a}-\varepsilon,\tilde{a}+\varepsilon)\setminus (P_y(D_1\cap D_2)\cup\{\tilde{a}\})$$
are nonempty open intervals. Indeed, this follows from the properties of $P_y(D_1\cap D_2)$ and $P_y(q(D_1)\cap q(D_2))$ as the open and nonempty intervals. 
Now, since $\tilde{a}\in P_y(q(D_1)\cap q(D_2))$ and $q(D_1)\cap q(D_2)$ is open, we can find points $w_1,w_2\in q(D_1)\cap q(D_2)$
such that
  $$\Re w_1<\Re w_2 \quad\text{ and }\quad \Im w_1=\Im w_2=\tilde{a}.$$
Recall, that $\tilde{a}\notin P_y(D_1\cap D_2)$, thus there exist points
  $$\eta_1,\eta_2\in D_1
  \quad \text{ and }\quad \zeta_1,\zeta_2\in D_2
  $$
such that $q(\eta_1)=q(\zeta_1)=w_1$ and $q(\eta_2)=q(\zeta_2)=w_2$ and, by Lemma \ref{shear_lemma}, they are unique.
Moreover, the assumption that the Jacobian $J_q$ is positive implies either
\begin{eqnarray}
  && \Re \eta_1<\Re \eta_2 < \Re \zeta_1<\Re \zeta_2,\nonumber \\
  \text{or}&& \nonumber\\
  && \Re \zeta_1<\Re \zeta_2 < \Re \eta_1<\Re \eta_2. \nonumber
\end{eqnarray}
Now, since $D_1$ and $D_2$ are open sets and $\widetilde{A}_\varepsilon$ is a nonempty, open interval,
then there exist sequences
  $$\mathbb{N}\ni n\mapsto \eta_{1,n}\in D_1
  ,\ \eta_{1,n}\to\eta_1, \quad\text{ and }\quad \mathbb{N}\ni n\mapsto \eta_{2,n}\in D_1
  ,\ \eta_{2,n}\to\eta_2,$$
and the sequences
  $$\mathbb{N}\ni n\mapsto \zeta_{1,n}\in D_2
  ,\ \zeta_{1,n}\to\zeta_1 \quad\text{ and }\quad \mathbb{N}\ni n\mapsto \zeta_{2,n}\in D_2
  ,\ \zeta_{2,n}\to\zeta_2,$$
with $\Im\eta_{1,n}=\Im\eta_{2,n}=\Im\zeta_{1,n}=\Im\zeta_{2,n}\in \widetilde{A}_{\varepsilon}$ and such that
either
\begin{eqnarray}\label{real_part_ineq_sq}
  && \Re \eta_{1,n}<\Re \eta_{2,n} < \Re \zeta_{1,n}<\Re \zeta_{2,n},\nonumber \\
  \text{or}&& \\
  && \Re \zeta_{1,n}<\Re \zeta_{2,n} < \Re \eta_{1,n}<\Re \eta_{2,n},\nonumber
\end{eqnarray}
for sufficiently large $n$. Next, from the continuity of $q$ we deduce that
  $$q(\eta_{1,n})\to w_1, \qquad q(\eta_{2,n})\to w_2 \quad \text{ and } \quad q(\zeta_{1,n})\to w_1,\qquad q(\zeta_{2,n})\to w_2.$$
Thus, by the assumption that $J_q>0$ and \eqref{real_part_ineq_sq} we have either
\begin{eqnarray*}
  &&
  \Re q(\eta_{2,n})
  <
  \Re q(\zeta_{1,n})
  \quad\text{or}\quad
  \Re q(\zeta_{2,n})
  <
  \Re q(\eta_{1,n}),
\end{eqnarray*}
for sufficiently large $n$, which implies $
\Re w_2
\leq
\Re w_1
$.
But this is a contradiction to the assumption $\Re w_1<\Re w_2$.
Thus, we have the inclusion $P_y(q(D_1)\cap q(D_2))\subset P_y(D_1\cap D_2)$,
which together with \eqref{incl} yields $P_y(D_1\cap D_2)=P_y(q(D_1)\cap q(D_2))$, and this completes the proof.
\end{proof}

\begin{corollary}\label{cor_1}
Let $D_1$, $D_2$ be the domains convex in the direction of the real axis with nonempty intersection, such that
$D_1\cup D_2$ is simply connected
and let $q:D_1\cup D_2\to\mathbb{C}$ be a continuous function for which the Jacobian $J_q$ exists, such that $\Im q(z)=\Im z$ for all $z\in D_1\cup D_2$
and $q(D_1)\cup q(D_2)$ is simply connected. Then $J_q\neq 0$ if, and only if, $q$ is 1-1.
\end{corollary}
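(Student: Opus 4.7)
The plan is to read off Corollary \ref{cor_1} as a direct synthesis of Theorems \ref{thm_1} and \ref{thm_2}; the two hypotheses added to the setup of Theorem \ref{thm_1} (simple connectedness of $D_1\cup D_2$ and of $q(D_1)\cup q(D_2)$) are precisely what is needed to supply, for free, the projection equality $P_y(D_1\cap D_2)=P_y(q(D_1)\cap q(D_2))$ that otherwise has to be assumed.

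For the easy direction, assume $q$ is 1-1. Then $q$ is a fortiori locally 1-1 on $D_1\cup D_2$, and since $J_q$ exists by hypothesis, it cannot vanish anywhere (this is the same observation used in the first half of the proof of Theorem \ref{thm_1}). For the nontrivial direction, assume $J_q\neq 0$ everywhere in $D_1\cup D_2$. All assumptions of Theorem \ref{thm_2} are then fulfilled: $D_1,D_2$ are convex in the direction of the real axis with $D_1\cap D_2\neq\emptyset$, $q$ is continuous with $\Im q(z)=\Im z$, $J_q$ exists and is nonzero, and both $D_1\cup D_2$ and $q(D_1)\cup q(D_2)$ are simply connected. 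Hence Theorem \ref{thm_2} yields
\[
  P_y(D_1\cap D_2)=P_y(q(D_1)\cap q(D_2)).
\]
Combined with $J_q\neq 0$, the sufficient condition in Theorem \ref{thm_1} now applies, and we conclude that $q$ is 1-1 on $D_1\cup D_2$.

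There is no real obstacle here: the corollary is essentially a repackaging, and the only thing to verify is that the hypotheses line up between the two theorems, which they do verbatim. If any subtlety arises, it is only a cosmetic reminder that Theorem \ref{thm_2} was stated for positive Jacobian (via the reduction at the start of its proof), so in the nonvanishing case it suffices to note that continuity of $J_q$ and connectedness of $D_1\cup D_2$ force $J_q$ to have constant sign, allowing the replacement of $q$ by its composition with complex conjugation if needed, without affecting the conclusion.
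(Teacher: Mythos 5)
Your proof is correct and follows exactly the route the paper takes: the paper's own proof of this corollary is the one-line remark that it is an immediate consequence of Theorems \ref{thm_1} and \ref{thm_2}, and your write-up simply makes explicit how the two directions are assembled. The cosmetic point you raise about the sign of the Jacobian is already absorbed into the statement and proof of Theorem \ref{thm_2} itself, so no extra argument is needed at the level of the corollary.
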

\begin{proof}
It is an immediate consequence of Theorem \ref{thm_1} and Theorem \ref{thm_2}.
\end{proof}

\section{Harmonic mappings}\setcounter{equation}{0}
In this section we apply the results obtained in the previous section to the theory of harmonic mappings.
We start with the definition which will simplify our considerations.
For a given set $D$ let
\begin{equation}\label{set_B}
  \Lambda_y(D):=\left\{a\in\mathbb{R}:(D\cap \{z\in\mathbb{C}:\Im z=a\})\ \text{is a nonempty and connected set}\right\}.
\end{equation}
We will see the set $\Lambda_y$ is as much convenient in the
following investigations as the set $P_y$, defined by
\eqref{set_projection}, was in the previous section. Thus, we need
the following lemma describing a connection between $P_y$ and
$\Lambda_y$.
\begin{lemma}\label{A_B_conection}
Let $D_1$, $D_2$ be the domains convex in the direction of the real
axis with nonempty intersection. Then $P_y(D_1\cap
D_2)=\Lambda_y(D_1\cup D_2)$.
\end{lemma}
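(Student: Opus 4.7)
The plan is to prove the two inclusions separately by analyzing horizontal slices. Write $L_a := \{z \in \mathbb{C} : \Im z = a\}$. The elementary fact used throughout is that since each $D_i$ is convex in the direction of the real axis, the slice $D_i \cap L_a$ is either empty or a single open interval on $L_a$.

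For the inclusion $P_y(D_1 \cap D_2) \subseteq \Lambda_y(D_1 \cup D_2)$, I would fix $a \in P_y(D_1 \cap D_2)$ and choose $z_0 \in D_1 \cap D_2$ with $\Im z_0 = a$. Both slices $D_1 \cap L_a$ and $D_2 \cap L_a$ are then open intervals containing $z_0$, so their union $(D_1 \cup D_2) \cap L_a$ is a single open interval, which is nonempty and connected. Hence $a \in \Lambda_y(D_1 \cup D_2)$.

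For the reverse inclusion $\Lambda_y(D_1 \cup D_2) \subseteq P_y(D_1 \cap D_2)$, I would fix $a \in \Lambda_y(D_1 \cup D_2)$ and decompose $(D_1 \cup D_2) \cap L_a = (D_1 \cap L_a) \cup (D_2 \cap L_a)$. Each summand is either empty or an open interval, while the union is nonempty and connected. When both summands are nonempty, two disjoint open intervals on $L_a$ cannot have a connected union, so the summands must overlap, producing a point of $D_1 \cap D_2$ on $L_a$ and thus $a \in P_y(D_1 \cap D_2)$.

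I expect the main obstacle to be ruling out the case in which exactly one of the slices $D_i \cap L_a$ is empty: the union is then trivially an open interval yet does not directly supply a point of $D_1 \cap D_2$. To exclude this, I would use the hypothesis $D_1 \cap D_2 \neq \emptyset$ to fix a reference height $b \in P_y(D_1 \cap D_2)$, where both slices are nonempty, and propagate nonemptiness to the value $a$ by a connectedness argument on the height parameter, exploiting the horizontal convexity of each $D_i$ together with the structure of $\Lambda_y(D_1 \cup D_2)$ as a subset of the open interval $P_y(D_1 \cup D_2)$. This step is where I anticipate the most care is needed; the rest of the argument is pure interval bookkeeping on a single horizontal line.
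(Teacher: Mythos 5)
Your first inclusion $P_y(D_1\cap D_2)\subseteq\Lambda_y(D_1\cup D_2)$ and the ``both slices nonempty'' half of the converse are correct and coincide with the paper's argument. The genuine problem is exactly the case you flag and then defer: a height $a$ at which one of the slices $D_i\cap\{z:\Im z=a\}$ is empty. The propagation-by-connectedness repair you sketch cannot work: the set of heights at which both slices are nonempty is $P_y(D_1)\cap P_y(D_2)$, an open subinterval of $P_y(D_1\cup D_2)$ that need not be relatively closed in it, so no connectedness argument on the height parameter produces a contradiction. Concretely, take $D_1=\mathbb{D}$ and $D_2=\{z:|z|<\frac{1}{2}\}$: both are domains convex in the direction of the real axis with nonempty intersection, yet $P_y(D_1\cap D_2)=(-\frac{1}{2},\frac{1}{2})$ while $\Lambda_y(D_1\cup D_2)=\Lambda_y(\mathbb{D})=(-1,1)$. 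So the step you anticipate as needing ``the most care'' is not merely delicate --- it is false, and the lemma as stated fails precisely when $P_y(D_1)\neq P_y(D_2)$, since a height lying in exactly one of the two projections gives a nonempty connected union-slice with empty intersection-slice.

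You should know that the paper's own proof of the reverse inclusion commits the same oversight: it asserts that $D_1\cap\{z:\Im z=a\}$ and $D_2\cap\{z:\Im z=a\}$ ``are open and connected intervals'' without addressing the possibility that one of them is empty, and then concludes that they must meet. Your proposal is in this sense sharper than the published argument, because you isolated the true obstruction; but the gap cannot be closed without strengthening the hypotheses, for instance by requiring $P_y(D_1)=P_y(D_2)$ (or by re-choosing the decomposition of $D_1\cup D_2$ so that this holds), after which your interval bookkeeping on a single horizontal line completes the proof.
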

\begin{proof}
Let $D_1$, $D_2$ be the domains convex in the direction of the real axis with nonempty intersection.
We will show  both inclusions
  $$P_y(D_1\cap D_2)\subset\Lambda_y(D_1\cup D_2)\quad \text{ and } \quad \Lambda_y(D_1\cup D_2)\subset P_y(D_1\cap D_2).$$

Assume first, that $a\in P_y(D_1\cap D_2)$. Then, there exists $w\in D_1\cap D_2$ such that $\Im w=a$.
This means, that
  $$w\in (D_1\cap\{z\in\mathbb{C}:\Im z=a\})\cap(D_2\cap\{z\in\mathbb{C}:\Im z=a\}).$$
Next, observe that the sets $D_1 \cap\{z\in\mathbb{C}:\Im z=a\}$ and $D_2 \cap\{z\in\mathbb{C}:\Im z=a\}$ are nonempty and connected,
since both domains $D_1$ and $D_2$ are convex in the direction of the real axis, and in addition they have nonempty intersection.
Thus, the set
  $$(D_1\cup D_2)\cap\{z\in\mathbb{C}:\Im z=a\}$$
is nonempty and connected, and consequently $a\in \Lambda_y(D_1\cup
D_2)$.

Now, we prove the second inclusion. Let $a\in \Lambda_y(D_1\cup
D_2)$, then the set
  $$(D_1\cup D_2)\cap\{z\in\mathbb{C}:\Im z=a\}$$
is nonempty and connected.
Next, observe that
  $$D_1\cap\{z\in\mathbb{C}:\Im z=a\}\quad \text{ and } \quad D_2\cap\{z\in\mathbb{C}:\Im z=a\}$$ are open and connected intervals
since $D_1$ and $D_2$ are open and convex in the direction of the real axis.
Hence, there exists $w\in D_1\cap D_2$, such that $\Im w=a$ and thus, $a\in P_y(D_1\cap D_2)$, which completes the prove.
\end{proof}

Now, we can apply results obtained in Section 2 to harmonic mappings.
\begin{theorem}\label{thm_3}
Let $f=h+\overline{g}$ be a harmonic function in $\mathbb{D}$ such
that $J_f>0$ in $\mathbb{D}$. If
$\Lambda_y((h-g)(\mathbb{D}))=\Lambda_y(f(\mathbb{D}))$ then the
following statements are equivalent
\begin{enumerate}
\item[(1)]{$f$ is 1-1
    mapping and $f(\mathbb{D})$ is a sum of two non-disjoint domains
    convex in the direction of the real axis.}
\item[(2)]{$h-g$ is 1-1 analytic mapping and $(h-g)(\mathbb{D})$ is a sum of two non-disjoint domains
    convex in the direction of the real axis.}
\end{enumerate}
\end{theorem}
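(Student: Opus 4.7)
The plan is to reduce both implications to a horizontal-shear problem to which Theorem \ref{thm_1} applies, after rewriting the hypothesis via Lemma \ref{A_B_conection}. In the direction (2)$\Rightarrow$(1) I would compose $f$ with the inverse of $F:=h-g$; in the direction (1)$\Rightarrow$(2) I would compose $F$ with the inverse of $f$. In either case, the composition is a continuous mapping on a union of two domains convex in the direction of the real axis, it preserves imaginary parts (because $f-F=g+\overline{g}$ is real-valued), and it has a positive Jacobian: $J_f>0$ forces $|h'|>|g'|$, whence $|h'-g'|\geq |h'|-|g'|>0$, and the chain rule gives the Jacobian of the composition as the ratio of $J_f$ and $|F'|^2$, both strictly positive.

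For (2)$\Rightarrow$(1), assume $F$ is a 1-1 analytic map with $F(\mathbb{D})=D_1\cup D_2$ as in (2). I define $q:=f\circ F^{-1}:D_1\cup D_2\to\mathbb{C}$; explicitly $q(w)=w+2\Re g(F^{-1}(w))$. Lemma \ref{shear_lemma} applied on each $D_i$ separately yields that $q$ is 1-1 on each $D_i$ and that $q(D_i)$ is a domain convex in the direction of the real axis; moreover $q(D_1)\cap q(D_2)\supseteq q(D_1\cap D_2)\neq\emptyset$. Using Lemma \ref{A_B_conection} on both sides together with the identity $q(D_1\cup D_2)=q(D_1)\cup q(D_2)=f(\mathbb{D})$, the hypothesis $\Lambda_y((h-g)(\mathbb{D}))=\Lambda_y(f(\mathbb{D}))$ translates to
$$P_y(D_1\cap D_2)=\Lambda_y(D_1\cup D_2)=\Lambda_y(q(D_1)\cup q(D_2))=P_y(q(D_1)\cap q(D_2)).$$
Theorem \ref{thm_1} then gives that $q$ is 1-1 on $D_1\cup D_2$, so $f=q\circ F$ is 1-1, and $f(\mathbb{D})=q(D_1)\cup q(D_2)$ is the desired two-piece decomposition. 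The implication (1)$\Rightarrow$(2) is proved by the symmetric argument: with $E_1,E_2$ the given decomposition of $f(\mathbb{D})$, set $q':=F\circ f^{-1}:E_1\cup E_2\to\mathbb{C}$ and repeat verbatim, obtaining the univalence of $F$ together with the description $(h-g)(\mathbb{D})=q'(E_1)\cup q'(E_2)$.

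I expect the main obstacle to lie not in any individual step but in the bookkeeping: one must ensure that every invocation of Lemma \ref{A_B_conection} is legitimate, which requires showing (via Lemma \ref{shear_lemma}) that the relevant image pieces $q(D_i)$ or $q'(E_i)$ are themselves domains convex in the direction of the real axis with nonempty intersection. The hypothesized equality of $\Lambda_y$-sets then functions as exactly the right substitute for the $P_y$-equality demanded by Theorem \ref{thm_1}, once the correct decompositions on both sides have been identified.
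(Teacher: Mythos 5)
Your proposal is correct and follows essentially the same route as the paper: compose $f$ and $h-g$ with each other's inverses, verify the imaginary-part preservation and nonvanishing Jacobian, translate the $\Lambda_y$-hypothesis into the $P_y$-equality via Lemma \ref{A_B_conection}, and conclude with Theorem \ref{thm_1} and Lemma \ref{shear_lemma}. The only (welcome) difference is that you establish that the pieces $q(D_i)$ are CHD domains with nonempty intersection \emph{before} invoking Lemma \ref{A_B_conection} on the image side, which the paper only records at the end.
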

\begin{proof}
Let $f=h+\overline{g}$ be a harmonic function in the unit disk and
such that $J_f$ is positive in $\mathbb{D}$, and let
$\Lambda_y((h-g)(\mathbb{D}))=\Lambda_y(f(\mathbb{D}))$. We show
that $(1)=>(2)$ and $(2)=>(1)$.

$(1)=>(2)$. Assume that $f$ is 1-1 in the unit disk and that $f(\mathbb{D})=D_1\cup D_2$,
where $D_1,D_2\subset\mathbb{C}$ are domains convex in the direction of the real axis with nonempty intersection.
Then there exists $f^{-1}:D_1\cup D_2\to \mathbb{D}$ and the composition $q:=(h-g)\circ f^{-1}$ is well defined continuous function.
Observe, that $q(w)=(h-g)(f^{-1}(w))=w-2\Re g(f^{-1}(w))$ for all $w\in D_1\cup D_2$.
Moreover, by Lemma \ref{A_B_conection} we have
\begin{equation}\label{proj_f}
\Lambda_y(f(\mathbb{D}))=\Lambda_y(D_1\cup D_2)=P_y(D_1\cap D_2).
\end{equation}
Similarly, by Lemma \ref{A_B_conection} and by equality $q(D_1\cup D_2)=q(D_1)\cup q(D_2)$ we have
\begin{equation}\label{proj_q}
\Lambda_y((h-g)(\mathbb{D}))
=\Lambda_y(q(D_1\cup D_2))=\Lambda_y(q(D_1)\cup
q(D_2))=P_y(q(D_1)\cap q(D_2)).
\end{equation}
The formulae \eqref{proj_f} and \eqref{proj_q}, together with the
hypothesis $\Lambda_y((h-g)(\mathbb{D}))=\Lambda_y(f(\mathbb{D}))$,
yield
\begin{equation}\label{proj_f_q}
   P_y(D_1\cap D_2)=P_y(q(D_1)\cap q(D_2)).
\end{equation}
Thus, the assumptions of Theorem \ref{thm_1} are satisfied and in consequence we obtain that $q$ is 1-1 in $\mathbb{D}$.
Hence, $h-g$ is 1-1 in $\mathbb{D}$, since $f$ is. Additionally, both sets $q(D_1)$ and $q(D_2)$ are domains convex in the direction of the real axis, by Lemma \ref{shear_lemma}, and their intersection is not empty by \eqref{proj_f_q}.

$(2)=>(1)$. Now, assume that $h-g$ is 1-1 in the unit disk and that $(h-g)(\mathbb{D})=\Omega_1\cup \Omega_2$,
where $\Omega_1,\Omega_2\subset\mathbb{C}$ are domains convex in the direction of the real axis with nonempty intersection.
Then there exists $(h-g)^{-1}:\Omega_1\cup \Omega_2\to \mathbb{D}$ and the composition $q:=f\circ (h-g)^{-1}$ is well defined continuous function.
Observe, that we have $q(w)=f((h-g)^{-1}(w))=w+2\Re g((h-g)^{-1}(w))$ for all $w\in \Omega_1\cup \Omega_2$.
Reasoning similar to the one used in previous case and the use of Lemma \ref{A_B_conection} give us equality
\begin{equation}\label{proj_om_f_q}
   P_y(\Omega_1\cap \Omega_2)=P_y(q(\Omega_1)\cap q(\Omega_2)).
\end{equation}
Again, the assumptions of Theorem \ref{thm_1} are satisfied and in consequence we obtain that $q$ is 1-1 in $\mathbb{D}$,
thus $f$ is 1-1 in $\mathbb{D}$, since $h-g$ is. Finally, $q(\Omega_1)$ and $q(\Omega_2)$ are domains convex in the direction of the real axis, by Lemma \ref{shear_lemma}, and their intersection is not empty by \eqref{proj_om_f_q}.
\end{proof}

As a consequence of Theorem \ref{thm_3} we obtain a generalization
of Theorem \ref{shear_construction} of Clunie and Sheil-Small.
\begin{theorem}\label{thm_4}
Let $f=h+\overline{g}$ be a harmonic function in $\mathbb{D}$ such that $J_f>0$ in $\mathbb{D}$.
If $(h-g)(\mathbb{D})$ and $f(\mathbb{D})$ are nonempty simply connected domains then the following statements are equivalent
\begin{enumerate}
\item[(1)]{$f$ is 1-1 mapping and $f(\mathbb{D})$ is a sum of two non-disjoint domains
                 convex in the direction of the real axis.}
\item[(2)]{$h-g$ is 1-1 analytic mapping and $(h-g)(\mathbb{D})$ is a sum of two non-disjoint domains
                 convex in the direction of the real axis.}
\end{enumerate}
\end{theorem}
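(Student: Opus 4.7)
The plan is to reduce Theorem \ref{thm_4} to Theorem \ref{thm_3} by deriving the equality $\Lambda_y((h-g)(\mathbb{D}))=\Lambda_y(f(\mathbb{D}))$ from the simple connectivity hypothesis in each of the two directions; once that equality is in place, Theorem \ref{thm_3} delivers both implications.

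For the direction (1)$\Rightarrow$(2), I would write $f(\mathbb{D})=D_1\cup D_2$ as in (1) and introduce the auxiliary function $q:=(h-g)\circ f^{-1}$ on $D_1\cup D_2$, exactly as in the proof of Theorem \ref{thm_3}. A short calculation gives $q(w)=w-2\Re g(f^{-1}(w))$, so $q$ is continuous with $\Im q(w)=\Im w$. The hypothesis $J_f>0$ forces $(h-g)'\neq 0$ on $\mathbb{D}$ (otherwise $h'=g'$ at such a point would give $|h'|=|g'|$, contradicting $J_f>0$), and the chain rule then yields $J_q\neq 0$ on $D_1\cup D_2$. Since both $f(\mathbb{D})=D_1\cup D_2$ and $(h-g)(\mathbb{D})=q(D_1)\cup q(D_2)$ are simply connected by hypothesis, Theorem \ref{thm_2} applies and produces
\[
  P_y(D_1\cap D_2)=P_y(q(D_1)\cap q(D_2)).
\]
Next, Lemma \ref{shear_lemma} applied on each $D_i$ certifies that $q(D_1)$ and $q(D_2)$ are themselves domains convex in the real direction, and their intersection contains the image of $D_1\cap D_2$, hence is nonempty. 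Two applications of Lemma \ref{A_B_conection}, one to $(D_1,D_2)$ and one to $(q(D_1),q(D_2))$, then convert the $P_y$ equality into
\[
  \Lambda_y(f(\mathbb{D}))=\Lambda_y((h-g)(\mathbb{D})),
\]
at which point Theorem \ref{thm_3} closes the argument.

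The direction (2)$\Rightarrow$(1) is symmetric: I would write $(h-g)(\mathbb{D})=\Omega_1\cup\Omega_2$, set $q:=f\circ(h-g)^{-1}$, note that $q(w)=w+2\Re g((h-g)^{-1}(w))$, and repeat the same sequence (Theorem \ref{thm_2}, Lemma \ref{shear_lemma}, Lemma \ref{A_B_conection}, Theorem \ref{thm_3}). There is no deep obstacle here; the only real piece of verification is the non-vanishing of $J_q$, which in both cases reduces to the identity $(h-g)'\neq 0$ forced by $J_f>0$. Everything else is bookkeeping that translates between the projection functional $P_y$ of Section 2 and the level-set functional $\Lambda_y$ of Section 3, and that repackages the information flowing between $f$ and its shear $h-g$.
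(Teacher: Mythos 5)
Your proposal is correct and follows essentially the same route as the paper: both reduce Theorem \ref{thm_4} to Theorem \ref{thm_3} by introducing the auxiliary maps $q_f=(h-g)\circ f^{-1}$ and $q_{h-g}=f\circ(h-g)^{-1}$ and deriving the hypothesis $\Lambda_y((h-g)(\mathbb{D}))=\Lambda_y(f(\mathbb{D}))$ from Theorem \ref{thm_2} and Lemma \ref{A_B_conection}. Your write-up merely makes explicit some steps the paper leaves implicit, such as the non-vanishing of $J_q$ and the application of Lemma \ref{shear_lemma} to the images $q(D_i)$.
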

\begin{proof}
Observe, that if $f$ is 1-1 in $\mathbb{D}$ and $f(\mathbb{D})=D_1\cup D_2$,
where $D_1,D_2\subset\mathbb{C}$ are domains convex in the direction of the real axis with nonempty intersection,
then the function
  $$D_1\cup D_2\ni w\mapsto q_{f}(w):=(h-g)(f^{-1}(w))=w-2\Re g(f^{-1}(w))$$
is well-defined and continuous in $D_1\cup D_2$.
The same is true if we assume that $h-g$ is 1-1 in $\mathbb{D}$ and $(h-g)(\mathbb{D})=\Omega_1\cup \Omega_2$,
where $\Omega_1,\Omega_2\subset\mathbb{C}$ are domains convex in the direction of the real axis with nonempty intersection, that is
the function
  $$D_1\cup D_2\ni w\mapsto q_{h-g}(w):=f((h-g)^{-1}(w))=w+2\Re g((h-g)^{-1}(w))$$
is well-defined and continuous in $\Omega_1\cup \Omega_2$.

Since $(h-g)(\mathbb{D})$ and $f(\mathbb{D})$ are nonempty simply connected domains then by Theorem \ref{thm_2} and Lemma \ref{A_B_conection}, the proof follows from Theorem \ref{thm_3}.
\end{proof}

If one omits in Theorem \ref{thm_4} the assumption that both $f(\mathbb{D})$ and $(h-g)(\mathbb{D})$ are simply connected, then the Theorem \ref{thm_4} is no longer true which is shown in the following example.

\begin{example}\rm
Consider vertical shear of the rotated Koebe function with
dilatation $\omega(z):=iz$. From the equations
\begin{align}
  &h(z)-g(z)=\frac{z}{(1-iz)^2}\nonumber \\
  &g^\prime(z)=izh^\prime(z)\nonumber
\end{align}
we get
\begin{align}
  &h(z)=\frac{-6iz-3z^2+iz^3}{6(i+z)^3},\nonumber \\
  &g(z)=\frac{3z^2+iz^3}{6(i+z)^3},\nonumber
\end{align}
and
  $$f(z)=h(z)+\overline{g(z)}=\frac{-6iz-3z^2+iz^3}{6(i+z)^3}+\overline{\left(\frac{3z^2+iz^3}{6(i+z)^3}\right)}.$$
Now, using transformation
  $$w=u+iv:=\frac{1+iz}{1-iz},$$
which maps the unit disk onto the right half-plane, i.e.
$\{w\in\mathbb{C}:\Re w>0\}$ we get
\begin{align}
  &h(z)-g(z)=\frac{1}{4i}(w^2-1),\nonumber \\
  &h(z)+g(z)=\frac{1}{6i}(w^3-1),\nonumber
\end{align}
and consequently
  $$f(z)=\Re(h(z)+g(z))+i\Im(h(z)-g(z))=-\frac{1}{6}\Im(w^3-1)-\frac{i}{4}\Re(w^2-1).$$
After some calculations we obtain
\begin{equation}
\label{map_f}
  f(z)=-\frac{1}{6}v(3u^2-v^2)-\frac{i}{4}(u^2-v^2-1),
\end{equation}
where $u>0$ and $v\in \mathbb{R}$.

Clearly, the function $h(z)-g(z)$ maps the unit disk onto the plane
with the slit along the imaginary axis, more precisely onto
$\mathbb{C}\setminus \{z\in\mathbb{C}: \Im z\geq \frac{1}{4}\text{
and }\Re z=0\}$, which is a simply connected domain. On the other
hand, the formula \eqref{map_f}, allows us to find the image of the
unit disk via the map $f(z)$, by studying which parts of the
vertical lines of the complex plane belong to $f(\mathbb{D})$.
First, observe that $\Re f(z)=0$ if and only if $v=0$ or $v^2=3u^2$.
Thus, we have $\Im f(z)=\frac{1}{4}-\frac{u^2}{4}$, with $u>0$, if
$v=0$ and $\Im f(z)=\frac{1}{4}+\frac{u^2}{2}$, with $u>0$, if
$v^2=3u^2$, and consequently we get that the point $\frac{i}{4}$ do
not belong to $f(\mathbb{D})$.

Now, assume that $\Re f(z)=c$ with $c\neq 0$. Then, since $v\neq 0$,
we have $u^2=\frac{v^2}{3}-\frac{2c}{v}$ and
  $$\Im f(z)=\frac{2v^3+3v+6c}{12v}, \quad\text{where } v\in(-\infty,0)\cup(0,+\infty).$$
If $c>0$ and $v\in (-\infty,0)$ then
\begin{align}
  &\lim_{v\to -\infty} \frac{2v^3+3v+6c}{12v}=+\infty,\nonumber\\
  &\lim_{v\to 0^-} \frac{2v^3+3v+6c}{12v}=-\infty,\nonumber
\end{align}
and the whole vertical line $w=c$ belongs to $f(\mathbb{D})$. Analogously, if $c<0$ and $v\in (0,+\infty)$ then
\begin{align}
  &\lim_{v\to +\infty} \frac{2v^3+3v+6c}{12v}=+\infty,\nonumber\\
  &\lim_{v\to 0^+} \frac{2v^3+3v+6c}{12v}=-\infty,\nonumber
\end{align}
and in that case the whole vertical line $w=c$ belongs to
$f(\mathbb{D})$, too. Hence, we get
$f(\mathbb{D})=\mathbb{C}\setminus\{\frac{i}{4}\}$ which is not
simply connected domain. The function $f$ fail to satisfy
assumptions of Theorem \ref{thm_3}, and straightforward calculations
shows that $f(\frac{\sqrt{3}}{3})=f(-\frac{\sqrt{3}}{3})
=\frac{3i}{8}$, thus $f$ is not univalent in $\mathbb{D}$.
\end{example}

\begin{remark}
Recall, that Theorem \ref{shear_construction} can be reformulated
and it remains valid for a function convex in any fixed direction.
Notice, that our results can also be rewritten in this fashion.
\end{remark}

\end{document}